\documentclass[10pt]{article}

\usepackage[margin=1.05in]{geometry}
\usepackage{amsmath,amssymb,amsthm,mathtools}
\usepackage{enumitem}
\usepackage[colorlinks=true,citecolor=blue,linkcolor=blue,urlcolor=blue]{hyperref}
\usepackage{xcolor}
\usepackage{microtype}
\usepackage[section]{placeins}
\usepackage{tikz}
\usetikzlibrary{arrows.meta,positioning,fit,calc,backgrounds}

\title{A Nonabelian Twist on Differences of Bijections}
\author{
Mohsen Aliabadi\\
Department of Mathematics, Clayton State University\\
Morrow, GA, USA\\
\texttt{MohsenAliabadi@clayton.edu, mohsenmath88@gmail.com}
}

\date{}

\newtheorem{theorem}{Theorem}[section]
\newtheorem{lemma}[theorem]{Lemma}
\newtheorem{proposition}[theorem]{Proposition}
\newtheorem{corollary}[theorem]{Corollary}
\newtheorem{definition}[theorem]{Definition}

\newtheorem{problem}[theorem]{Problem}
\newtheorem{remark}[theorem]{Remark}

\DeclareMathOperator{\Sym}{Sym}

\begin{document}

\maketitle

\begin{abstract}
Hall's theorem on differences of bijections characterizes the multisets
\(\{a_1,\ldots,a_{|G|}\}\) in a finite abelian group \(G\) that can be written
in the form
\[
        a_i=b_i-c_i,
\]
where both \(b_1,\ldots,b_{|G|}\) and \(c_1,\ldots,c_{|G|}\) are enumerations
of \(G\). The necessary and sufficient condition is the zero-sum condition
\[
        a_1+\cdots+a_{|G|}=0.
\]
This paper studies the corresponding problem for finite nonabelian groups,
with differences replaced by quotients. Thus we ask when a multiset \(A\) of
cardinality \(|G|\) can be represented as
\[
        A=\{b(i)c(i)^{-1}:1\le i\le |G|\},
\]
where \(b\) and \(c\) are bijections onto \(G\).

Passing to the abelianization gives a necessary condition, namely that the
product of the images of the elements of \(A\) is trivial in \(G_{\rm ab}\).
We show that this condition is not sufficient in general, even when the
elements of \(A\) admit an ordering whose product is the identity in \(G\).
The main structural result is a cycle-tiling criterion: quotient-realizability
is equivalent to a decomposition of \(A\) into product-one words whose
partial-product sets tile \(G\) by right translates. The use of permutation
cycles is standard, but the criterion translates quotient-realizability into
an exact tiling condition. We then use this criterion to construct a
counterexample in \(S_3\), and we extend the same obstruction to infinitely
many finite nonabelian groups.
\end{abstract}

\noindent
\textbf{2020 Mathematics Subject Classification.}
Primary 20D60; Secondary 05B15, 05A05, 20K01.

\noindent
\textbf{Keywords.}
finite groups, quotient representations, Hall's theorem, bijections,
cycle decompositions, group tilings, zero-sum partitions.

\section{Introduction}\label{sec:introduction}

Hall's theorem on differences of bijections is a classical result in
combinatorial group theory. Let \(G\) be a finite abelian group, written
additively, and let
\[
        a_1,\ldots,a_{|G|}\in G.
\]
One asks when there exist two enumerations
\[
        b_1,\ldots,b_{|G|} \qquad\text{and}\qquad
        c_1,\ldots,c_{|G|}
\]
of the elements of \(G\) such that
\[
        a_i=b_i-c_i
        \qquad (1\le i\le |G|).
\]
There is an immediate necessary condition. Summing the equations gives
\[
        a_1+\cdots+a_{|G|}=0,
\]
because the \(b_i\)'s and the \(c_i\)'s are two enumerations of the same finite
abelian group. Hall proved that this condition is also sufficient
\cite{Hall1952}. Thus, for finite abelian groups, the zero-sum obstruction is
the only obstruction.

The problem has several equivalent or related formulations. In the cyclic
group \(\mathbb Z/n\mathbb Z\), it may be viewed as a problem of placing
directed arrows of prescribed lengths on a regular \(n\)-gon so that no two
arrows have the same tail and no two arrows have the same head. Ullman and
Velleman \cite{UllmanVelleman2019} give a detailed exposition of this
viewpoint and explain connections with juggling sequences, bus scheduling,
infinite abelian groups, and partial transversals in Latin squares. The finite
abelian case is also closely related to the earlier work of Salzborn and
Szekeres \cite{SalzbornSzekeres1979}, while the infinite abelian case goes
back to Fuchs \cite{Fuchs1958}.

The purpose of the present paper is to examine what remains true when the
group is not assumed to be abelian. We write groups multiplicatively. For a
finite possibly nonabelian group \(G\), the natural analogue of a difference
is a quotient. Thus we ask whether a multiset \(A\) of cardinality \(|G|\),
with terms in \(G\), can be written as
\[
        A=\{b(i)c(i)^{-1}:1\le i\le |G|\},
\]
where
\[
        b,c:\{1,\ldots,|G|\}\longrightarrow G
\]
are bijections. We call such a multiset \emph{quotient-realizable}; see
Definition \ref{def:quotient-realizable}.

If \(G\) is abelian, then this is exactly Hall's theorem written
multiplicatively. Indeed, a multiset
\[
        A=\{a_1,\ldots,a_{|G|}\}
\]
is quotient-realizable in an abelian group if and only if
\[
        a_1a_2\cdots a_{|G|}=1.
\]
For a nonabelian group there is still a necessary condition obtained by
passing to the abelianization. If
\[
        \pi:G\longrightarrow G_{\rm ab}=G/[G,G]
\]
is the canonical projection and \(A\) is quotient-realizable, then
\[
        \prod_{a\in A}^{\rm mult}\pi(a)=1
        \qquad\text{in }G_{\rm ab}.
\]
This product is well-defined because \(G_{\rm ab}\) is abelian. We refer to
this as the \emph{abelianization obstruction}; see
Proposition \ref{prop:abelianization-obstruction}.

One might hope that the abelianization obstruction is sufficient, in direct
analogy with Hall's theorem. The main point of the present paper is that this
is not the case. We show that even the stronger condition that the elements of
\(A\) can be ordered so that their product is \(1\) in \(G\) does not imply
quotient-realizability. The additional obstruction is not detected by a single
product. It comes from the cycle structure of a permutation of \(G\).

Indeed, a quotient representation is equivalent to a permutation
\(\varphi\in\Sym(G)\) satisfying
\[
        A=\{\varphi(x)x^{-1}:x\in G\};
\]
see Lemma \ref{lem:single-permutation}. Thus the directed graph whose edges are
\[
        x\longrightarrow \varphi(x)
\]
is a disjoint union of directed cycles. The label on the edge
\(x\to \varphi(x)\) is \(\varphi(x)x^{-1}\). Along each cycle, the labels form
a product-one word. More precisely, if a cycle is
\[
        x_0\mapsto x_1\mapsto \cdots \mapsto x_{\ell-1}\mapsto x_0,
\]
then the corresponding labels
\[
        g_j=x_jx_{j-1}^{-1}\quad (1\le j\le \ell-1),
        \qquad
        g_\ell=x_0x_{\ell-1}^{-1}
\]
satisfy
\[
        g_\ell g_{\ell-1}\cdots g_1=1.
\]
Furthermore, the partial products
\[
        p_0=1,\qquad p_j=g_jg_{j-1}\cdots g_1
\]
recover the vertices of the cycle by
\[
        p_j=x_jx_0^{-1}.
\]
Thus a quotient realization is not merely an ordering of \(A\) with product
\(1\). It is a decomposition of \(A\) into product-one words whose
partial-product sets can be placed disjointly inside \(G\) by right
translation.

This leads to the main structural result of the paper. A word
\[
        w=(g_1,\ldots,g_\ell)
\]
is called a \emph{simple product-one word} if
\[
        p_\ell=1
\]
and
\[
        p_0,p_1,\ldots,p_{\ell-1}
\]
are pairwise distinct, where \(p_j=g_jg_{j-1}\cdots g_1\). We write
\[
        P(w)=\{p_0,p_1,\ldots,p_{\ell-1}\}.
\]
Theorem \ref{thm:cycle-tiling} states that \(A\) is quotient-realizable in
\(G\) if and only if \(A\) can be partitioned, as a multiset, into simple
product-one words
\[
        w_1,\ldots,w_r
\]
such that there exist elements \(x_1,\ldots,x_r\in G\) with
\[
        G=\bigsqcup_{j=1}^r P(w_j)x_j.
\]

\begin{figure}[!htbp]
\centering
\begin{tikzpicture}[
    scale=1,
    vertex/.style={circle,draw,minimum size=9mm,inner sep=1pt},
    edge/.style={-{Latex[length=2mm]},thick},
    note/.style={draw,rounded corners,align=center,text width=8cm,inner sep=6pt}
]

\node[vertex] (x0) at (90:2.2) {$x_0$};
\node[vertex] (x1) at (18:2.2) {$x_1$};
\node[vertex] (x2) at (-54:2.2) {$x_2$};
\node[vertex] (x3) at (-126:2.2) {$x_3$};
\node[vertex] (x4) at (162:2.2) {$x_4$};

\draw[edge] (x0) -- node[above right] {$g_1$} (x1);
\draw[edge] (x1) -- node[right] {$g_2$} (x2);
\draw[edge] (x2) -- node[below] {$g_3$} (x3);
\draw[edge] (x3) -- node[left] {$g_4$} (x4);
\draw[edge] (x4) -- node[above left] {$g_5$} (x0);

\node[note] at (0,-4.0) {
Along this cycle, the word \(w=(g_1,\ldots,g_5)\) is a simple
product-one word, and the partial products satisfy
\[
p_j=g_jg_{j-1}\cdots g_1=x_jx_0^{-1}.
\]
Thus the right translate \(P(w)x_0\) recovers the vertices of the cycle.
};

\end{tikzpicture}
\caption{A directed cycle arising from a quotient realization. The edge labels form a simple product-one word \(w=(g_1,\ldots,g_5)\). Its partial-product set \(P(w)=\{p_0,p_1,\ldots,p_4\}\), where \(p_0=1\) and \(p_j=g_jg_{j-1}\cdots g_1\), recovers the vertex set after right translation by \(x_0\).}
\label{fig:cycle-word}
\end{figure}
Figure~\ref{fig:cycle-word} illustrates this mechanism. The labels along a directed cycle form a simple product-one word, and the corresponding partial-product set recovers the vertices of the cycle by right translation.
This is the basic local picture behind the cycle-tiling decompositions used below.

The cycle decomposition of a permutation is, of course, a standard tool. The
contribution here is the formulation of quotient-realizability as an exact
tiling condition by partial-product sets. This point is important for
distinguishing the present problem from several related theories. Complete
mappings and orthomorphisms also concern permutations of a finite group whose
associated difference or quotient maps are bijective; see, for example, the
work of Hall and Paige \cite{HallPaige1955} and the survey of Evans
\cite{Evans2015}. Those problems correspond to the special case in which the
quotient map itself is a bijection. By contrast, in the present paper the
quotient multiset is prescribed in advance and may have repeated elements.
Thus the problem is not the existence of one complete mapping, but rather the
realizability of an arbitrary multiset of quotients.

There are also connections with Latin squares and transversals. Complete
mappings of a finite group correspond to transversals in the Cayley table, and
orthomorphisms correspond to orthogonality constructions for Latin squares.
Standard references for this viewpoint include D\'enes and Keedwell
\cite{DenesKeedwell1974}, Evans \cite{Evans2015}, and Wanless
\cite{Wanless2011}. Ullman and Velleman \cite{UllmanVelleman2019} also
explain how differences of bijections may be interpreted in terms of partial
transversals. The present quotient-realizability problem can be viewed in this
same general setting, but with a prescribed multiset of symbols rather than a
single transversal using each symbol once.

The partial-product aspect of our criterion is related in spirit to group
sequencings, terraces, and \(R\)-sequencings. These topics study orderings of
group elements whose partial products or quotients have prescribed
distinctness properties; see the survey of Ollis \cite{Ollis2012}. The
difference is that our criterion allows several cycles rather than one global
ordering, and it requires the corresponding partial-product sets to tile the
ambient group by right translates. In this sense, the obstruction obtained
here is a tiling obstruction associated with the cycle decomposition of a
permutation. Cycle decompositions of Cayley graphs form another related
background; see, for instance, the survey of Alspach, Bermond, and Sotteau
\cite{AlspachBermondSotteau1990}.

The paper also intersects with zero-sum and product-one theory. In abelian
groups, Hall's condition is a zero-sum condition. In nonabelian groups,
unordered sequences whose terms can be ordered to have product \(1\) are often
called product-one sequences. Such sequences and their associated Davenport
constants have been studied extensively; see, for example, Gao and Geroldinger
\cite{GaoGeroldinger2006} for the abelian theory and Geroldinger and
Grynkiewicz \cite{GeroldingerGrynkiewicz2014} for product-one sequences over
nonabelian groups. The examples in this paper show that product-one
orderability alone is still not enough for quotient-realizability. The cycle
tiles must also fit inside the group.

We use the cycle-tiling criterion in two main ways. First, we prove a
subgroup-support theorem. If \(H\le G\) and the multiset \(A\) is supported in
\(H\), then every quotient realization preserves the right cosets of \(H\).
Consequently, \(A\) is quotient-realizable in \(G\) if and only if \(A\) can
be partitioned into \([G:H]\) submultisets of size \(|H|\), each
quotient-realizable inside \(H\); see Theorem \ref{thm:subgroup-support}.
When \(H\) is abelian, this becomes a zero-sum block partition condition; see
Corollary \ref{cor:abelian-subgroup}.

Second, we construct an explicit obstruction in \(S_3\). Let
\[
        G=S_3,\qquad s=(12),\qquad t=(23),
\]
and consider the multiset
\[
        A=\{s,s,t,t,t,t\}.
\]
This multiset satisfies the abelianization obstruction, since it contains an
even number of transpositions. It also admits an ordering with product \(1\),
for instance
\[
        ss\,tt\,tt=1.
\]
Nevertheless, \(A\) is not quotient-realizable in \(S_3\). The reason is that
a simple product-one word using only \(s\) and \(t\) must be either
\[
        (s,s),\qquad (t,t),
\]
or an alternating word of length \(6\) containing three copies of \(s\) and
three copies of \(t\). Since \(A\) contains two copies of \(s\) and four copies
of \(t\), any cycle-tiling decomposition would have to tile \(S_3\) by one
right coset of \(\langle s\rangle\) and two right cosets of
\(\langle t\rangle\). Such a tiling is impossible; see
Proposition \ref{prop:S3-counterexample}.

Finally, we show that this phenomenon is not isolated. If \(K\) is any finite
group with \(3\nmid |K|\), then
\[
        S_3\times K
\]
admits a multiset satisfying the abelianization obstruction and admitting a
product-one ordering, but not quotient-realizable; see
Theorem \ref{thm:infinite-family}. Thus the abelianization condition is not a
sufficient criterion in the class of finite nonabelian groups.

The paper is organized as follows. Section \ref{sec:quotient-realizable}
introduces quotient-realizable multisets and records the abelianization
obstruction. Section \ref{sec:cycle-tiling} proves the cycle-tiling criterion.
Section \ref{sec:subgroup-support} treats multisets supported in a subgroup.
Section \ref{sec:S3} gives the counterexample in \(S_3\). Section
\ref{sec:infinite-family} extends the obstruction to the family
\(S_3\times K\) with \(3\nmid |K|\). The final section records several
concrete problems for further study.

\section{Quotient-realizable multisets}\label{sec:quotient-realizable}

Throughout the paper, \(G\) denotes a finite group with identity element \(1\).
All multisets are counted with multiplicity.

\begin{definition}\label{def:quotient-realizable}
Let \(A\) be a multiset of cardinality \(|G|\) whose elements belong to \(G\).
We say that \(A\) is \emph{quotient-realizable} in \(G\) if there exist
bijections
\[
        b,c:\{1,\ldots,|G|\}\to G
\]
such that
\[
        A=\{b(i)c(i)^{-1}:1\le i\le |G|\}
\]
as multisets.
\end{definition}

It is convenient to replace the pair of bijections by a single permutation of
\(G\).

\begin{lemma}\label{lem:single-permutation}
Let \(A\) be a multiset of cardinality \(|G|\) in \(G\). Then \(A\) is
quotient-realizable in \(G\) if and only if there exists a permutation
\(\varphi\in \Sym(G)\) such that
\[
        A=\{\varphi(x)x^{-1}:x\in G\}.
\]
\end{lemma}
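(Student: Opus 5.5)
The plan is to prove the two directions separately, translating between a pair of bijections \(b,c\) and the single permutation \(\varphi=b\circ c^{-1}\) of \(G\). The whole argument is a reindexing of the multiset.

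\emph{Forward direction.} Suppose \(A\) is quotient-realizable, witnessed by bijections \(b,c:\{1,\ldots,|G|\}\to G\). Since both maps are bijections, \(\varphi:=b\circ c^{-1}\) is a well-defined permutation of \(G\). As \(i\) runs over \(\{1,\ldots,|G|\}\), the element \(x=c(i)\) runs over \(G\) exactly once. Moreover \(b(i)=\varphi(c(i))=\varphi(x)\). Hence
\[
\{b(i)c(i)^{-1}:1\le i\le |G|\}=\{\varphi(x)x^{-1}:x\in G\}
\]
as multisets, because the reindexing \(i\mapsto c(i)\) is bijective and so preserves multiplicities.

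\emph{Reverse direction.} Suppose a permutation \(\varphi\in\Sym(G)\) satisfies \(A=\{\varphi(x)x^{-1}:x\in G\}\). Fix any enumeration \(c:\{1,\ldots,|G|\}\to G\), which is a bijection, and set \(b:=\varphi\circ c\). Then \(b\) is a composition of bijections, hence a bijection. For each \(i\) we have \(b(i)c(i)^{-1}=\varphi(c(i))c(i)^{-1}\). By the same reindexing as before, the multiset of these quotients is exactly \(\{\varphi(x)x^{-1}:x\in G\}=A\).

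There is no genuine obstacle. The only point that needs care is that the equalities are equalities of multisets, not sets. This is handled by observing that reindexing along a bijection preserves multiplicities: each \(x\in G\) corresponds to exactly one index \(i=c^{-1}(x)\).
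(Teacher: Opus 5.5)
Your proof is correct and is essentially the paper's argument. The paper likewise sets \(\varphi(x)=b(i)\) for the unique \(i\) with \(c(i)=x\), which is your \(\varphi=b\circ c^{-1}\); conversely it enumerates \(G\) as \(c(i)=x_i\) and takes \(b(i)=\varphi(x_i)\). Your explicit remark that reindexing along the bijection \(c\) preserves multiplicities is a point the paper leaves implicit.
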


\begin{proof}
Suppose first that
\[
        A=\{b(i)c(i)^{-1}:1\le i\le |G|\}
\]
for bijections \(b,c:\{1,\ldots,|G|\}\to G\). For each \(x\in G\), there is a
unique index \(i\) such that \(c(i)=x\). Define
\[
        \varphi(x)=b(i).
\]
Since both \(b\) and \(c\) are bijections, \(\varphi\) is a permutation of
\(G\). Moreover,
\[
        \varphi(x)x^{-1}=b(i)c(i)^{-1}.
\]
Therefore
\[
        A=\{\varphi(x)x^{-1}:x\in G\}.
\]

Conversely, suppose such a permutation \(\varphi\) is given. Enumerate
\[
        G=\{x_1,\ldots,x_{|G|}\}.
\]
Define
\[
        c(i)=x_i,\qquad b(i)=\varphi(x_i).
\]
Then \(b\) and \(c\) are bijections, and
\[
        b(i)c(i)^{-1}=\varphi(x_i)x_i^{-1}.
\]
Thus \(A\) is quotient-realizable.
\end{proof}

\begin{proposition}\label{prop:abelianization-obstruction}
Let \(A\) be a quotient-realizable multiset in \(G\), and let
\[
        \pi:G\to G_{\mathrm{ab}}=G/[G,G]
\]
be the canonical projection. Then
\[
        \prod_{a\in A}^{\mathrm{mult}} \pi(a)=1
        \qquad\text{in }G_{\mathrm{ab}}.
\]
Here the product is taken with multiplicity and is independent of the chosen
enumeration of \(A\), since \(G_{\mathrm{ab}}\) is abelian.
\end{proposition}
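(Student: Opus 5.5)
By Lemma \ref{lem:single-permutation}, there is a permutation \(\varphi\in\Sym(G)\) with
\[
        A=\{\varphi(x)x^{-1}:x\in G\}
\]
as multisets. The plan is to push this representation through the homomorphism \(\pi\) and compute in the abelian group \(G_{\mathrm{ab}}\), where the order of factors no longer matters.

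First, since \(G_{\mathrm{ab}}\) is abelian, the multiplicity product \(\prod^{\mathrm{mult}}_{a\in A}\pi(a)\) does not depend on the enumeration of \(A\). Using the enumeration indexed by \(x\in G\) given above, we get
\[
        \prod_{a\in A}^{\mathrm{mult}}\pi(a)=\prod_{x\in G}\pi\bigl(\varphi(x)x^{-1}\bigr)
        =\prod_{x\in G}\pi(\varphi(x))\,\pi(x)^{-1}.
\]
Second, commutativity lets us split this as
\[
        \Bigl(\prod_{x\in G}\pi(\varphi(x))\Bigr)\Bigl(\prod_{x\in G}\pi(x)\Bigr)^{-1}.
\]
Because \(\varphi\) is a bijection of \(G\), the first factor is just a reindexing of \(\prod_{y\in G}\pi(y)\). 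Hence the two factors cancel and the product equals \(1\).

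There is no real obstacle. The only point needing care is that every rearrangement happens after applying \(\pi\), that is, inside the abelian group \(G_{\mathrm{ab}}\), and never in \(G\) itself. Alternatively, one could work directly with the bijections \(b,c\) of Definition \ref{def:quotient-realizable}, writing the product as \(\prod_i\pi(b(i))\prod_i\pi(c(i))^{-1}\) and observing that both factors equal \(\prod_{y\in G}\pi(y)\).
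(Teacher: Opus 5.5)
Your proof is correct and is essentially the paper's argument: you compute the product in $G_{\mathrm{ab}}$ as (product of $\pi$ over the targets) times the inverse of (product of $\pi$ over the sources), and both factors equal $\prod_{y\in G}\pi(y)$. The only difference is that the paper works directly with the bijections $b,c$ instead of first passing to the permutation $\varphi$ of Lemma~\ref{lem:single-permutation} — exactly the alternative you mention at the end.
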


\begin{proof}
Choose bijections
\[
        b,c:\{1,\ldots,|G|\}\to G
\]
such that
\[
        A=\{b(i)c(i)^{-1}:1\le i\le |G|\}
\]
as multisets. Since \(G_{\mathrm{ab}}\) is abelian, we have
\[
        \prod_{a\in A}^{\mathrm{mult}} \pi(a)
        =
        \prod_{i=1}^{|G|} \pi\bigl(b(i)c(i)^{-1}\bigr)
        =
        \left(\prod_{i=1}^{|G|}\pi(b(i))\right)
        \left(\prod_{i=1}^{|G|}\pi(c(i))\right)^{-1}.
\]
Because \(b\) and \(c\) are both bijections onto \(G\), the multisets
\[
        \{\pi(b(i)):1\le i\le |G|\}
        \quad\text{and}\quad
        \{\pi(c(i)):1\le i\le |G|\}
\]
are equal. Hence the two products are equal in \(G_{\mathrm{ab}}\), and
therefore
\[
        \prod_{a\in A}^{\mathrm{mult}} \pi(a)=1.
\]
\end{proof}

\begin{remark}
If \(G\) is abelian, Proposition \ref{prop:abelianization-obstruction} is
exactly Hall's zero-sum condition written multiplicatively. The results below
show that, for nonabelian groups, this condition is not sufficient.
\end{remark}

\section{The cycle-tiling criterion}\label{sec:cycle-tiling}

We now prove the main structural theorem.

\begin{definition}\label{def:simple-word}
Let
\[
        w=(g_1,\ldots,g_\ell)
\]
be a word in \(G\). Define its left partial products by
\[
        p_0=1,\qquad
        p_j=g_jg_{j-1}\cdots g_1
        \quad (1\le j\le \ell).
\]
We say that \(w\) is a \emph{simple product-one word} if
\[
        p_\ell=1
\]
and the elements
\[
        p_0,p_1,\ldots,p_{\ell-1}
\]
are pairwise distinct. In this case we define
\[
        P(w)=\{p_0,p_1,\ldots,p_{\ell-1}\}.
\]
\end{definition}

\begin{remark}
A word of length \(1\) is simple product-one exactly when it is \((1)\). This
corresponds to a loop in the cycle decomposition: the vertex is fixed by the
corresponding permutation.
\end{remark}

\begin{definition}\label{def:cycle-tiling}
Let \(A\) be a multiset of cardinality \(|G|\) in \(G\). If
\[
        w=(g_1,\ldots,g_\ell)
\]
is a word in \(G\), we write
\[
        [w]=\{g_1,\ldots,g_\ell\}_{\rm mult}
\]
for the underlying multiset of its letters.

A \emph{cycle-tiling decomposition} of \(A\) is a collection of simple
product-one words
\[
        w_1,\ldots,w_r
\]
such that
\[
        A=[w_1]\sqcup\cdots\sqcup [w_r]
\]
as multisets, and such that there exist elements \(x_1,\ldots,x_r\in G\)
satisfying
\[
        G=\bigsqcup_{j=1}^r P(w_j)x_j.
\]
Here \(\bigsqcup\) denotes disjoint union, and
\[
        P(w_j)x_j=\{px_j:p\in P(w_j)\}.
\]
\end{definition}

\begin{theorem}\label{thm:cycle-tiling}
Let \(G\) be a finite group, and let \(A\) be a multiset of cardinality
\(|G|\) in \(G\). Then \(A\) is quotient-realizable in \(G\) if and only if
\(A\) admits a cycle-tiling decomposition.
\end{theorem}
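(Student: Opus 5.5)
The proof will go through Lemma \ref{lem:single-permutation}, which lets us replace quotient-realizability by the existence of a permutation \(\varphi\in\Sym(G)\) with \(A=\{\varphi(x)x^{-1}:x\in G\}\). Both directions then amount to translating between the cycle decomposition of \(\varphi\) and the words and tiles of Definition \ref{def:cycle-tiling}.

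For the forward direction, fix such a \(\varphi\) and decompose it into disjoint cycles \(C_1,\ldots,C_r\), fixed points included as cycles of length \(1\). In each cycle choose a starting vertex \(x_0^{(j)}\) and write the cycle as \(x_0\mapsto x_1\mapsto\cdots\mapsto x_{\ell-1}\mapsto x_0\). Define the word \(w_j=(g_1,\ldots,g_\ell)\) by \(g_k=x_kx_{k-1}^{-1}\), with indices read modulo \(\ell\). The products telescope, giving \(p_k=g_k\cdots g_1=x_kx_0^{-1}\) by induction on \(k\). Hence \(p_\ell=x_0x_0^{-1}=1\). Moreover \(p_0,\ldots,p_{\ell-1}\) are distinct because the \(x_k\) are distinct and right multiplication by \(x_0^{-1}\) is injective. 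So \(w_j\) is a simple product-one word, and \(P(w_j)x_0^{(j)}=C_j\). The letters of \(w_j\) are exactly the labels \(\varphi(x)x^{-1}\) for \(x\in C_j\). Since the cycles partition \(G\), we get \(A=\bigsqcup_j[w_j]\) as multisets and \(G=\bigsqcup_j P(w_j)x_j\) with \(x_j=x_0^{(j)}\).

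For the converse, take a cycle-tiling decomposition \(w_1,\ldots,w_r\) with translating elements \(x_1,\ldots,x_r\). Write \(w_j=(g_1,\ldots,g_\ell)\) with partial products \(p_k\). Define \(\varphi\) on the tile \(P(w_j)x_j\) by
\[
\varphi(p_{k-1}x_j)=p_kx_j \qquad (1\le k\le \ell).
\]
This is well defined because the \(p_{k-1}x_j\) for \(0\le k-1\le\ell-1\) are distinct, by simplicity of \(w_j\). Because \(p_\ell=1=p_0\), the map sends the tile onto itself and is a cyclic permutation of it. The tiles partition \(G\), so these pieces assemble to a permutation \(\varphi\in\Sym(G)\). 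Its labels are
\[
\varphi(p_{k-1}x_j)\,(p_{k-1}x_j)^{-1}=p_kp_{k-1}^{-1}=g_k,
\]
so the multiset of labels on tile \(j\) is \([w_j]\). Summing over tiles gives \(\{\varphi(x)x^{-1}:x\in G\}=\bigsqcup_j[w_j]=A\), and Lemma \ref{lem:single-permutation} finishes the proof.

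No step is a serious obstacle. The only point needing care is the bookkeeping in the converse: checking that \(\varphi\) is well defined and bijective on each tile, which uses both the distinctness of \(p_0,\ldots,p_{\ell-1}\) and the closing condition \(p_\ell=1\). A length-\(1\) word \((1)\) gives a fixed point, consistent with the earlier remark.
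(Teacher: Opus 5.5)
Your proposal is correct and follows the paper's proof essentially step for step. In the forward direction you read off simple product-one words from the cycles of $\varphi$ via the telescoping identity $p_k=x_kx_0^{-1}$. In the converse you build $\varphi$ cycle by cycle on the tiles $P(w_j)x_j$ and verify that the labels satisfy $p_kp_{k-1}^{-1}=g_k$, exactly as in the paper.
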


\begin{proof}
Suppose first that \(A\) is quotient-realizable in \(G\). By
Lemma~\ref{lem:single-permutation}, there exists a permutation
\(\varphi\in\Sym(G)\) such that
\[
        A=\{\varphi(x)x^{-1}:x\in G\}
\]
as multisets.

Decompose \(\varphi\) into disjoint directed cycles. Consider one such cycle,
written as
\[
        x_0\longmapsto x_1\longmapsto \cdots
        \longmapsto x_{\ell-1}\longmapsto x_0 .
\]
The edge labels on this cycle are
\[
        g_j=x_jx_{j-1}^{-1}\qquad (1\le j\le \ell-1),
\]
and
\[
        g_\ell=x_0x_{\ell-1}^{-1}.
\]
Thus \(g_1,\ldots,g_\ell\) are precisely the elements
\(\varphi(x)x^{-1}\) arising from this directed cycle, listed in cyclic order.

Let
\[
        w=(g_1,\ldots,g_\ell).
\]
We show that \(w\) is a simple product-one word. Define
\[
        p_0=1,\qquad
        p_j=g_jg_{j-1}\cdots g_1
        \quad (1\le j\le \ell).
\]
For \(1\le j\le \ell-1\), telescoping gives
\[
        p_j
        =
        (x_jx_{j-1}^{-1})(x_{j-1}x_{j-2}^{-1})
        \cdots (x_1x_0^{-1})
        =
        x_jx_0^{-1}.
\]
Moreover,
\[
        p_\ell
        =
        g_\ell p_{\ell-1}
        =
        (x_0x_{\ell-1}^{-1})(x_{\ell-1}x_0^{-1})
        =
        1.
\]
Since the vertices
\[
        x_0,x_1,\ldots,x_{\ell-1}
\]
are pairwise distinct, the elements
\[
        p_0,p_1,\ldots,p_{\ell-1}
\]
are pairwise distinct. Hence \(w\) is a simple product-one word.

Furthermore,
\[
        P(w)x_0
        =
        \{p_0x_0,p_1x_0,\ldots,p_{\ell-1}x_0\}
        =
        \{x_0,x_1,\ldots,x_{\ell-1}\}.
\]
Thus the right translate \(P(w)x_0\) is exactly the vertex set of this cycle.

Applying the same construction to every cycle of \(\varphi\), we obtain simple
product-one words
\[
        w_1,\ldots,w_r.
\]
For each \(j\), let \(x_j\) denote the initial vertex chosen for the cycle
corresponding to \(w_j\). Since the directed cycles of \(\varphi\) are
disjoint and cover \(G\), the translated partial-product sets are pairwise
disjoint and satisfy
\[
        G=\bigsqcup_{j=1}^r P(w_j)x_j.
\]
Also, the edge labels of all the cycles are exactly the elements of the
multiset
\[
        \{\varphi(x)x^{-1}:x\in G\}=A.
\]
Therefore the underlying multisets of the words satisfy
\[
        A=[w_1]\sqcup\cdots\sqcup [w_r].
\]
Hence \(A\) admits a cycle-tiling decomposition.

Conversely, suppose that \(A\) admits a cycle-tiling decomposition. Thus there
exist simple product-one words
\[
        w_1,\ldots,w_r,
\]
where
\[
        w_j=(g_{j,1},\ldots,g_{j,\ell_j}),
\]
such that
\[
        A=[w_1]\sqcup\cdots\sqcup [w_r]
\]
as multisets, and there exist elements \(x_1,\ldots,x_r\in G\) such that
\[
        G=\bigsqcup_{j=1}^r P(w_j)x_j.
\]

For each \(j\), define the partial products of \(w_j\) by
\[
        p_{j,0}=1,\qquad
        p_{j,k}=g_{j,k}g_{j,k-1}\cdots g_{j,1}
        \quad (1\le k\le \ell_j).
\]
Since \(w_j\) is a simple product-one word, we have
\[
        p_{j,\ell_j}=1,
\]
and the elements
\[
        p_{j,0},p_{j,1},\ldots,p_{j,\ell_j-1}
\]
are pairwise distinct.

On the translated set \(P(w_j)x_j\), define a directed cycle by
\[
        p_{j,k-1}x_j \longmapsto p_{j,k}x_j
        \qquad (1\le k\le \ell_j),
\]
where the final edge is interpreted using
\[
        p_{j,\ell_j}x_j=p_{j,0}x_j=x_j.
\]
Because the sets \(P(w_j)x_j\) are pairwise disjoint and their union is \(G\),
these directed cycles together define a permutation
\[
        \varphi\in\Sym(G).
\]

It remains to verify that the labels of this permutation are exactly the
letters of the words \(w_1,\ldots,w_r\). Consider the edge
\[
        p_{j,k-1}x_j \longmapsto p_{j,k}x_j.
\]
Its label is
\[
        (p_{j,k}x_j)(p_{j,k-1}x_j)^{-1}.
\]
Since
\[
        (p_{j,k-1}x_j)^{-1}=x_j^{-1}p_{j,k-1}^{-1},
\]
we get
\[
        (p_{j,k}x_j)(p_{j,k-1}x_j)^{-1}
        =
        p_{j,k}x_jx_j^{-1}p_{j,k-1}^{-1}
        =
        p_{j,k}p_{j,k-1}^{-1}.
\]
But
\[
        p_{j,k}=g_{j,k}p_{j,k-1},
\]
and therefore
\[
        p_{j,k}p_{j,k-1}^{-1}=g_{j,k}.
\]
Thus the labels on the cycle \(P(w_j)x_j\) are precisely
\[
        g_{j,1},\ldots,g_{j,\ell_j}.
\]
Consequently,
\[
        \{\varphi(x)x^{-1}:x\in G\}
        =
        [w_1]\sqcup\cdots\sqcup [w_r]
        =
        A
\]
as multisets. By Lemma~\ref{lem:single-permutation}, \(A\) is
quotient-realizable in \(G\).
\end{proof}

\begin{remark}
Theorem \ref{thm:cycle-tiling} gives an exact structural criterion for the
nonabelian problem. Unlike Hall's theorem in the abelian case, the criterion is
not merely numerical: it records how product-one words tile the group through
their partial-product sets. Together with Hall's theorem, it implies that for
finite abelian groups the cycle-tiling condition is equivalent to the zero-sum
condition. In nonabelian groups, the partial-product sets and their right
translates carry additional information which is not detected by the
abelianization.
\end{remark}

\section{Multisets supported in a subgroup}\label{sec:subgroup-support}

The cycle-tiling criterion becomes especially useful when all labels lie in a
subgroup. In that case the arrows cannot move from one right coset of the
subgroup to another. This observation turns the general problem into a
collection of smaller problems inside the subgroup.

\begin{theorem}\label{thm:subgroup-support}
Let \(H\le G\), and let \(A\) be a multiset of cardinality \(|G|\) supported in
\(H\). Then \(A\) is quotient-realizable in \(G\) if and only if \(A\) can be
partitioned into \([G:H]\) submultisets
\[
        A_1,\ldots,A_{[G:H]},
        \qquad |A_j|=|H|,
\]
such that each \(A_j\) is quotient-realizable in \(H\).
\end{theorem}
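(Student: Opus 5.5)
We prove both directions through the single-permutation reformulation of Lemma~\ref{lem:single-permutation}. The key observation is that if \(\varphi\in\Sym(G)\) satisfies \(\varphi(x)x^{-1}\in H\) for every \(x\in G\), then \(\varphi(x)\in Hx\). Hence \(\varphi\) maps each right coset \(Hx\) into itself, and since \(\varphi\) is injective and the cosets are finite, it restricts to a permutation of each right coset.

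\emph{Forward direction.} Suppose \(A\) is quotient-realizable in \(G\). Choose \(\varphi\) with \(A=\{\varphi(x)x^{-1}:x\in G\}\), and fix right coset representatives \(t_1,\ldots,t_m\), where \(m=[G:H]\). Since \(A\) is supported in \(H\), every label \(\varphi(x)x^{-1}\) lies in \(H\), so \(\varphi\) preserves each coset \(Ht_j\). Set
\[
A_j=\{\varphi(x)x^{-1}:x\in Ht_j\}.
\]
These multisets partition \(A\), and \(|A_j|=|H|\). To see that \(A_j\) is quotient-realizable in \(H\), define \(\psi_j\in\Sym(H)\) by \(\psi_j(h)=\varphi(ht_j)t_j^{-1}\). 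This lands in \(H\) and is a bijection of \(H\) because \(\varphi\) permutes \(Ht_j\). Its labels are
\[
\psi_j(h)h^{-1}=\varphi(ht_j)t_j^{-1}h^{-1}=\varphi(ht_j)(ht_j)^{-1},
\]
so \(A_j=\{\psi_j(h)h^{-1}:h\in H\}\). Lemma~\ref{lem:single-permutation}, applied in \(H\), then shows that \(A_j\) is quotient-realizable in \(H\).

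\emph{Reverse direction.} Suppose \(A=A_1\sqcup\cdots\sqcup A_m\) with each \(A_j\) realized in \(H\) by some \(\psi_j\in\Sym(H)\). Define \(\varphi(ht_j)=\psi_j(h)t_j\). This is well defined and bijective because \(G\) is the disjoint union of the cosets \(Ht_j\), and each \(\psi_j\) permutes \(H\). The label computation above, read in reverse, gives
\[
\{\varphi(x)x^{-1}:x\in Ht_j\}=A_j,
\]
so the full label multiset is \(A\), and \(A\) is quotient-realizable in \(G\).

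Alternatively, one could phrase the argument through Theorem~\ref{thm:cycle-tiling}: partial products of words with letters in \(H\) lie in \(H\), so each tile \(P(w)x\) lies in the coset \(Hx\). However, the direct permutation argument is cleaner. The only real point, and the step to state carefully, is the coset-preservation observation, namely that \(\varphi(x)x^{-1}\in H\) forces \(\varphi(x)\in Hx\). It is essential here to use right cosets, matching the convention \(\varphi(x)x^{-1}\). The conjugation step \(h\mapsto ht_j\) is routine bookkeeping.
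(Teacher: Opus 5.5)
Your proposal is correct and matches the paper's proof essentially step for step. Both rest on the observation that \(\varphi(x)x^{-1}\in H\) forces \(\varphi(x)\in Hx\), both use the induced permutations \(\psi_j\) defined by \(\varphi(ht_j)=\psi_j(h)t_j\), and both glue these back together for the converse (the map \(h\mapsto ht_j\) is a right translation, not a conjugation, but that is only a wording slip).
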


\begin{proof}
Suppose first that \(A\) is quotient-realizable in \(G\). Choose
\(\varphi\in \Sym(G)\) such that
\[
        A=\{\varphi(x)x^{-1}:x\in G\}.
\]
Since \(A\) is supported in \(H\), for every \(x\in G\) we have
\[
        \varphi(x)x^{-1}\in H.
\]
Hence
\[
        \varphi(x)\in Hx.
\]
Thus \(\varphi\) maps every element \(x\) into the same right coset \(Hx\).
Therefore each right coset of \(H\) is invariant under \(\varphi\).

Let
\[
        G=Hx_1\sqcup\cdots\sqcup Hx_r,
        \qquad r=[G:H],
\]
be the right-coset decomposition. For each \(j\), define \(A_j\) to be the
multiset
\[
        A_j=\{\varphi(y)y^{-1}:y\in Hx_j\}.
\]
Then \(|A_j|=|H|\), and
\[
        A=A_1\sqcup\cdots\sqcup A_r
\]
as multisets.

It remains to prove that \(A_j\) is quotient-realizable in \(H\). Since
\(\varphi\) preserves \(Hx_j\), for every \(h\in H\) there exists a unique
element \(\psi_j(h)\in H\) such that
\[
        \varphi(hx_j)=\psi_j(h)x_j.
\]
The map \(\psi_j:H\to H\) is a permutation because \(\varphi\) restricts to a
permutation of the coset \(Hx_j\). Moreover,
\[
        \varphi(hx_j)(hx_j)^{-1}
        =
        \psi_j(h)x_jx_j^{-1}h^{-1}
        =
        \psi_j(h)h^{-1}.
\]
Thus
\[
        A_j=\{\psi_j(h)h^{-1}:h\in H\}.
\]
So \(A_j\) is quotient-realizable in \(H\).

Conversely, suppose
\[
        A=A_1\sqcup\cdots\sqcup A_r,
        \qquad r=[G:H],
\]
where \(|A_j|=|H|\) and each \(A_j\) is quotient-realizable in \(H\). Choose
right coset representatives
\[
        x_1,\ldots,x_r
\]
for \(H\) in \(G\). For each \(j\), choose a permutation
\(\psi_j\in \Sym(H)\) such that
\[
        A_j=\{\psi_j(h)h^{-1}:h\in H\}.
\]
Define \(\varphi:G\to G\) by
\[
        \varphi(hx_j)=\psi_j(h)x_j
        \qquad (h\in H,\ 1\le j\le r).
\]
This is well-defined because every element of \(G\) has a unique expression
\(hx_j\) with \(h\in H\). Since each \(\psi_j\) is a permutation of \(H\),
\(\varphi\) is a permutation of \(G\). Finally,
\[
        \varphi(hx_j)(hx_j)^{-1}
        =
        \psi_j(h)x_jx_j^{-1}h^{-1}
        =
        \psi_j(h)h^{-1}.
\]
Therefore the multiset of quotients \(\{\varphi(x)x^{-1}:x\in G\}\) is
precisely \(A\). Hence \(A\) is quotient-realizable in \(G\).
\end{proof}

\begin{corollary}\label{cor:abelian-subgroup}
Let \(H\le G\) be abelian, and let \(A\) be a multiset of cardinality \(|G|\)
supported in \(H\). Then \(A\) is quotient-realizable in \(G\) if and only if
\(A\) can be partitioned into \([G:H]\) submultisets
\[
        A_1,\ldots,A_{[G:H]},
        \qquad |A_j|=|H|,
\]
such that
\[
        \prod_{a\in A_j}a=1
\]
for every \(j\).
\end{corollary}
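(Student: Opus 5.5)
The corollary follows by combining Theorem~\ref{thm:subgroup-support} with Hall's theorem applied inside the abelian group \(H\). By Theorem~\ref{thm:subgroup-support}, \(A\) is quotient-realizable in \(G\) if and only if \(A\) splits into \([G:H]\) submultisets \(A_j\) with \(|A_j|=|H|\), each quotient-realizable in \(H\). So it suffices to show, for a fixed multiset \(B\) of cardinality \(|H|\) in \(H\), that \(B\) is quotient-realizable in \(H\) if and only if \(\prod_{a\in B}a=1\). Since \(H\) is abelian, this product is well defined independently of the ordering.

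For the forward direction I will apply Proposition~\ref{prop:abelianization-obstruction} with \(H\) in place of \(G\). Because \(H\) is abelian, \(H_{\rm ab}=H\) and \(\pi\) is the identity, so quotient-realizability of \(B\) in \(H\) forces \(\prod_{a\in B}a=1\).

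For the converse I will invoke Hall's theorem \cite{Hall1952} in multiplicative form, as already noted in the introduction. If \(\prod_{a\in B}a=1\), there are enumerations \(b_i,c_i\) of \(H\) with \(a_i=b_ic_i^{-1}\). By Definition~\ref{def:quotient-realizable}, this says exactly that \(B\) is quotient-realizable in \(H\).

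Substituting this equivalence into each block \(A_j\) in Theorem~\ref{thm:subgroup-support} gives the corollary. There is no genuine obstacle: the only external input is Hall's theorem, and the only thing to check is that the additive statement translates correctly to the multiplicative setting with quotients \(bc^{-1}\). This holds because commutativity lets us write \(b-c\) as \(bc^{-1}\) without any ambiguity of order.
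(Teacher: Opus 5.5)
Your proof is correct and takes the same route as the paper: apply Theorem~\ref{thm:subgroup-support}, then use Hall's theorem blockwise in the abelian subgroup \(H\). The only difference is cosmetic. You derive the necessity direction inside \(H\) from Proposition~\ref{prop:abelianization-obstruction}, with \(\pi\) the identity, whereas the paper cites Hall's theorem as a full equivalence.
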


\begin{proof}
By Theorem \ref{thm:subgroup-support}, quotient-realizability of \(A\) in
\(G\) is equivalent to a partition of \(A\) into \([G:H]\) blocks of size
\(|H|\), each of which is quotient-realizable in \(H\). Since \(H\) is
abelian, Hall's theorem applies: a multiset \(B\) of cardinality \(|H|\) in
\(H\) is quotient-realizable in \(H\) if and only if
\[
        \prod_{b\in B}b=1.
\]
Applying this criterion to each block gives the claim.
\end{proof}

\begin{corollary}\label{cor:cyclic-subgroup}
Let \(g\in G\) have order \(d\), and set \(H=\langle g\rangle\). Let \(A\) be
a multiset of cardinality \(|G|\) supported in \(H\). Write the elements of
\(A\) as powers of \(g\). Then \(A\) is quotient-realizable in \(G\) if and
only if \(A\) can be partitioned into \([G:H]\) blocks
\[
        A_1,\ldots,A_{[G:H]},
        \qquad |A_j|=d,
\]
such that, whenever
\[
        A_j=\{g^{e_{j,1}},\ldots,g^{e_{j,d}}\},
\]
one has
\[
        e_{j,1}+\cdots+e_{j,d}\equiv 0\pmod d.
\]
\end{corollary}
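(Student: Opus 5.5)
This follows directly from Corollary~\ref{cor:abelian-subgroup}, since \(H=\langle g\rangle\) is cyclic and hence abelian. That corollary says \(A\) is quotient-realizable in \(G\) if and only if \(A\) splits into \([G:H]\) blocks \(A_j\) with \(|A_j|=|H|=d\) and \(\prod_{a\in A_j}a=1\) for every \(j\). So all that remains is to restate the block condition \(\prod_{a\in A_j}a=1\) in terms of exponents.

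Fix a block \(A_j=\{g^{e_{j,1}},\ldots,g^{e_{j,d}}\}\). Since \(H\) is abelian, the order of the factors does not matter, and
\[
\prod_{a\in A_j}a=g^{e_{j,1}+\cdots+e_{j,d}}.
\]
Because \(g\) has order \(d\), we have \(g^m=1\) if and only if \(d\mid m\). Hence \(\prod_{a\in A_j}a=1\) exactly when \(e_{j,1}+\cdots+e_{j,d}\equiv 0\pmod d\).

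There is one small point to check. The exponents \(e_{j,i}\) are only determined modulo \(d\), because \(g^e=g^{e'}\) whenever \(e\equiv e'\pmod d\). Changing any one exponent by a multiple of \(d\) changes the sum by a multiple of \(d\), so the congruence does not depend on the choice of representatives. With this, the partition condition of Corollary~\ref{cor:abelian-subgroup} and the stated exponent condition are the same, which proves the claim.
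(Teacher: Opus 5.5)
Your proposal is correct and matches the paper's proof: you apply Corollary~\ref{cor:abelian-subgroup} to the cyclic (hence abelian) subgroup \(H=\langle g\rangle\), and then rewrite \(\prod_{a\in A_j}a=1\) as \(g^{e_{j,1}+\cdots+e_{j,d}}=1\), which holds exactly when \(d\mid e_{j,1}+\cdots+e_{j,d}\). Your extra remark, that the congruence does not depend on which exponent representatives modulo \(d\) are chosen, is a small point the paper leaves implicit.
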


\begin{proof}
This is Corollary \ref{cor:abelian-subgroup} applied to the cyclic subgroup
\(H=\langle g\rangle\). For a block \(A_j\subseteq H\),
\[
        \prod_{a\in A_j}a=1
\]
if and only if
\[
        g^{e_{j,1}+\cdots+e_{j,d}}=1,
\]
which is equivalent to
\[
        e_{j,1}+\cdots+e_{j,d}\equiv 0\pmod d.
\]
\end{proof}

\section{The first obstruction: \texorpdfstring{$S_3$}{S3}}\label{sec:S3}

We now give a concrete example showing that the abelianization obstruction is
not sufficient.

Let
\[
        G=S_3,\qquad s=(12),\qquad t=(23).
\]
We use the convention that permutations are composed from right to left. Thus
products act on the rightmost element first. With this convention,
\[
        s^2=t^2=1,\qquad (st)^3=(ts)^3=1.
\]
This convention is consistent with our left-partial-product convention
\[
        p_j=g_jg_{j-1}\cdots g_1.
\]
For example, for the alternating word
\[
        w=(s,t,s,t,s,t),
\]
the full product is
\[
        p_6=tststs=(ts)^3=1.
\]

Consider the multiset
\[
        A=\{s,s,t,t,t,t\}.
\]

\begin{figure}[!htbp]
\centering
\begin{tikzpicture}[every node/.style={font=\small}]
  \node[anchor=west,font=\bfseries] at (-0.4,2.1) {Right cosets of $\langle s\rangle$};
  \node[circle,draw,fill=gray!10,minimum size=8mm] (a0) at (0,1.2) {$1$};
  \node[circle,draw,fill=gray!10,minimum size=8mm] (b0) at (1.0,1.2) {$s$};
  \node[circle,draw,fill=gray!10,minimum size=8mm] (a1) at (3.2,1.2) {$t$};
  \node[circle,draw,fill=gray!10,minimum size=8mm] (b1) at (4.2,1.2) {$st$};
  \node[circle,draw,fill=gray!10,minimum size=8mm] (a2) at (6.4,1.2) {$ts$};
  \node[circle,draw,fill=gray!10,minimum size=8mm] (b2) at (7.4,1.2) {$sts$};
  \begin{scope}[on background layer]
    \node[draw=black,rounded corners,fit=(a0)(b0),inner sep=4pt] {};
    \node[draw=black,rounded corners,fit=(a1)(b1),inner sep=4pt] {};
    \node[draw=black,rounded corners,fit=(a2)(b2),inner sep=4pt] {};
  \end{scope}

  \node[anchor=west,font=\bfseries] at (-0.4,-0.2) {Right cosets of $\langle t\rangle$};
  \node[circle,draw,fill=gray!10,minimum size=8mm] (c0) at (0,-1.1) {$1$};
  \node[circle,draw,fill=gray!10,minimum size=8mm] (d0) at (1.0,-1.1) {$t$};
  \node[circle,draw,fill=gray!10,minimum size=8mm] (c1) at (3.2,-1.1) {$s$};
  \node[circle,draw,fill=gray!10,minimum size=8mm] (d1) at (4.2,-1.1) {$ts$};
  \node[circle,draw,fill=gray!10,minimum size=8mm] (c2) at (6.4,-1.1) {$st$};
  \node[circle,draw,fill=gray!10,minimum size=8mm] (d2) at (7.4,-1.1) {$sts$};
  \begin{scope}[on background layer]
    \node[draw=black,dashed,rounded corners,fit=(c0)(d0),inner sep=4pt] {};
    \node[draw=black,dashed,rounded corners,fit=(c1)(d1),inner sep=4pt] {};
    \node[draw=black,dashed,rounded corners,fit=(c2)(d2),inner sep=4pt] {};
  \end{scope}

  \node[draw,rounded corners,fill=gray!8,text width=0.88\linewidth,align=left] at (4.2,-2.75) {
  For $A=\{s,s,t,t,t,t\}$, any quotient realization would have to use one tile
  of type $\langle s\rangle x$ and two tiles of type $\langle t\rangle y$.
  Proposition~\ref{prop:S3-counterexample} shows that such a mixed coset tiling
  of $S_3$ is impossible.};
\end{tikzpicture}
\caption{The two right-coset partitions used in the obstruction in \(S_3\).
Solid boxes indicate right cosets of \(\langle s\rangle\), while dashed boxes
indicate right cosets of \(\langle t\rangle\). The figure illustrates why a
mixed tiling by one coset of \(\langle s\rangle\) and two cosets of
\(\langle t\rangle\) would force a coset of \(\langle s\rangle\) to coincide
with a coset of \(\langle t\rangle\), which is impossible.}
\label{fig:s3cosets}
\end{figure}

We first classify the simple product-one words in \(S_3\) using only the
letters \(s\) and \(t\).

\begin{lemma}\label{lem:S3-words}
Let
\[
        w=(g_1,\ldots,g_\ell)
\]
be a simple product-one word in \(S_3\), where each \(g_i\) belongs to
\(\{s,t\}\). Then exactly one of the following holds:
\begin{enumerate}[label=\textup{(\roman*)}]
    \item \(w=(s,s)\);
    \item \(w=(t,t)\);
    \item \(\ell=6\), and \(w\) is alternating in \(s\) and \(t\). In
    particular, \(w\) contains three occurrences of \(s\) and three
    occurrences of \(t\).
\end{enumerate}
\end{lemma}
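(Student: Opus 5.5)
The plan is to view the partial products as a walk in the Cayley graph of \(S_3\) with respect to left multiplication by \(s\) and \(t\). Since \(p_j=g_jp_{j-1}\), the sequence \(p_0=1,p_1,\ldots,p_\ell=1\) is a closed walk. Its vertices \(p_0,\ldots,p_{\ell-1}\) are pairwise distinct, so the walk is a simple closed walk, that is, a cycle when \(\ell\ge 3\). The left Cayley graph of \(S_3\) with generators \(s,t\) is a hexagon. Its vertices are \(1,s,ts,sts,st,t\). Each vertex \(x\) is joined to \(sx\) by an \(s\)-edge and to \(tx\) by a \(t\)-edge. The \(s\)-edges and \(t\)-edges alternate around the hexagon. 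I would verify this structure directly from the relations \(s^2=t^2=1\) and \((ts)^3=1\), together with the fact that \(1,s,ts,sts\) are distinct.

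First I would dispose of short words. The case \(\ell=1\) is impossible, because \(p_1=g_1\ne 1\). If \(\ell=2\), then \(g_2g_1=1\) forces \(g_2=g_1^{-1}=g_1\), since \(s\) and \(t\) are involutions. This gives \((s,s)\) or \((t,t)\), and both are simple because \(p_1\neq p_0\).

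Next, for \(\ell\ge 3\), I would show that consecutive letters must differ. If \(g_{j+1}=g_j\) for some \(1\le j\le \ell-1\), then \(p_{j+1}=g_j^2p_{j-1}=p_{j-1}\). This contradicts distinctness whenever \(j+1\le \ell-1\). In the remaining case \(j=\ell-1\), we get \(p_{\ell-2}=p_\ell=1=p_0\), which contradicts distinctness because \(\ell-2\ge 1\). Hence \(w\) is alternating. The alternating words are \((s,t,s,t,\ldots)\) and \((t,s,t,s,\ldots)\). Their partial products walk around the hexagon in one direction, visiting \(1,g_1,g_2g_1,\ldots\). These partial products return to \(1\) exactly when \(\ell\) is a multiple of \(6\). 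This follows because \((ts)^k\) and \((st)^k\) are trivial only when \(3\mid k\), and odd-length alternating products are reflections, hence never trivial. Distinctness of \(p_0,\ldots,p_{\ell-1}\) rules out \(\ell\ge 12\), since \(p_6=1=p_0\). Therefore \(\ell=6\), and \(w\) has three \(s\)'s and three \(t\)'s. Conversely, both alternating words of length \(6\) are simple, because their six partial products are the six distinct hexagon vertices.

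The only delicate point is the bookkeeping in the "consecutive equal letters" step at the wraparound index \(j=\ell-1\). I would also record that odd alternating products are nontrivial, which can be seen via the sign homomorphism: a product of an odd number of transpositions is odd, hence \(\neq 1\). The three cases are mutually exclusive by length, which gives "exactly one".
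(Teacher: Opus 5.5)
Your proof is correct and follows essentially the same route as the paper. In both arguments, consecutive equal letters force $\ell=2$, using the same wraparound bookkeeping at $j=\ell-1$; an alternating word then has product a power of the order-$3$ element $ts$ or $st$, which forces $\ell=6$. The differences are minor: you rule out odd length with the sign homomorphism where the paper uses the cyclic check ($g_\ell=g_1$ would give $p_{\ell-1}=p_1$), you bound $\ell$ via $p_6=p_0$ rather than $|S_3|=6$, and the hexagon Cayley graph is a useful picture that the argument does not actually need.
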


\begin{proof}
Let
\[
        p_0=1,\qquad p_j=g_jg_{j-1}\cdots g_1.
\]
Since \(w\) is simple product-one, \(p_\ell=1\), and
\[
        p_0,p_1,\ldots,p_{\ell-1}
\]
are pairwise distinct.

Suppose first that \(g_i=g_{i+1}\) for some \(1\le i<\ell\). Since both \(s\)
and \(t\) have order \(2\), we have
\[
        p_{i+1}
        =
        g_{i+1}p_i
        =
        g_i(g_ip_{i-1})
        =
        p_{i-1}.
\]
If \(i+1\le \ell-1\), this contradicts the pairwise distinctness of
\(p_0,\ldots,p_{\ell-1}\). If \(i+1=\ell\), then \(p_\ell=p_{i-1}\). Since
\(p_\ell=1=p_0\), this gives \(p_{i-1}=p_0\), contradicting the pairwise
distinctness of \(p_0,\ldots,p_{\ell-1}\) unless \(i-1=0\). In the latter
case \(\ell=2\). Therefore the only possible words with two consecutive equal
letters are \((s,s)\) and \((t,t)\).

It remains to consider the case in which no two consecutive letters
\(g_i,g_{i+1}\), with \(1\le i<\ell\), are equal. Suppose that \(g_\ell=g_1\).
Since \(p_\ell=1\), we have
\[
        p_{\ell-1}=g_\ell^{-1}=g_\ell=g_1=p_1.
\]
If \(\ell>2\), this contradicts the pairwise distinctness of
\(p_0,\ldots,p_{\ell-1}\). Thus, outside the two length-two cases already
identified, we must have no consecutive equality even cyclically. Hence the
word is cyclically alternating in \(s\) and \(t\). In particular, \(\ell\) is
even.

If the word begins with \(s\), then its full product \(p_\ell\) is a power of
\(ts\); if it begins with \(t\), then its full product is a power of \(st\).
Both \(st\) and \(ts\) have order \(3\) in \(S_3\). Hence an alternating word
has product \(1\) precisely when its length is divisible by \(6\). Since a
simple product-one word has \(\ell\) distinct partial products
\[
        p_0,p_1,\ldots,p_{\ell-1}
\]
in the group \(S_3\), we must have \(\ell\le 6\). Therefore \(\ell=6\).
Such a word contains three occurrences of \(s\) and three occurrences of
\(t\).
\end{proof}

\begin{proposition}\label{prop:S3-counterexample}
The multiset
\[
        A=\{s,s,t,t,t,t\}
\]
is not quotient-realizable in \(S_3\).
\end{proposition}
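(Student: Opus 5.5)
We argue by contradiction using Theorem~\ref{thm:cycle-tiling}. Suppose \(A=\{s,s,t,t,t,t\}\) were quotient-realizable. Then \(A\) would admit a cycle-tiling decomposition into simple product-one words \(w_1,\ldots,w_r\) with letters in \(\{s,t\}\), together with elements \(x_1,\ldots,x_r\in S_3\) such that \(S_3=\bigsqcup_j P(w_j)x_j\). By Lemma~\ref{lem:S3-words}, each \(w_j\) is \((s,s)\), \((t,t)\), or an alternating word of length \(6\) using three copies of \(s\). The last type is impossible, since \(A\) has only two copies of \(s\). So the decomposition consists of exactly one word \((s,s)\) and two words \((t,t)\), matching the letter counts \(2\) and \(4\).

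Next we compute the tiles. For \(w=(s,s)\) we get \(P(w)=\{1,s\}=\langle s\rangle\), and similarly \(P((t,t))=\langle t\rangle\). The tiling condition then becomes
\[
S_3=\langle s\rangle x\sqcup \langle t\rangle y\sqcup \langle t\rangle z
\]
for some \(x,y,z\in S_3\).

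The key step is to rule this out. The right cosets of \(\langle t\rangle\) partition \(S_3\) into three blocks of size \(2\). The two tiles \(\langle t\rangle y\) and \(\langle t\rangle z\) are disjoint right cosets of \(\langle t\rangle\), so they are two distinct blocks of this partition. Their complement is therefore the third right coset \(\langle t\rangle u\). Hence \(\langle s\rangle x=\langle t\rangle u\). But this coset contains \(u\), so it equals both \(\langle s\rangle u\) and \(\langle t\rangle u\). This forces \(\langle s\rangle=\langle t\rangle\) after right-multiplying by \(u^{-1}\), which is false since \(s\neq t\). This contradiction proves the proposition.

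The only delicate point is making sure Lemma~\ref{lem:S3-words} really covers every word that can occur. It does: every letter of every \(w_j\) lies in \(A\subseteq\{s,t\}\), so the lemma applies to each word. Length-\(1\) words would have to be \((1)\), which is excluded because \(1\notin A\).
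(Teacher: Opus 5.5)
Your proof is correct and follows the paper's argument essentially step for step. You reduce via Theorem~\ref{thm:cycle-tiling} and Lemma~\ref{lem:S3-words} to a tiling of $S_3$ by one right coset of $\langle s\rangle$ and two of $\langle t\rangle$; the $\langle s\rangle$-coset must then equal the leftover $\langle t\rangle$-coset, and evaluating at a common element (your $u$, the paper's $z$) forces $\langle s\rangle=\langle t\rangle$.
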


\begin{proof}
Assume, for a contradiction, that \(A\) is quotient-realizable in \(S_3\).
By Theorem~\ref{thm:cycle-tiling}, \(A\) admits a cycle-tiling decomposition.

Since
\[
        A=\{s,s,t,t,t,t\},
\]
every word appearing in such a decomposition has all of its letters in
\(\{s,t\}\). By Lemma~\ref{lem:S3-words}, every simple product-one word with
letters in \(\{s,t\}\) is either
\[
        (s,s),
        \qquad
        (t,t),
\]
or an alternating word of length \(6\) containing three copies of \(s\) and
three copies of \(t\).

The multiset \(A\) contains only two copies of \(s\). Therefore no alternating
word of length \(6\) can appear in the decomposition, since such a word would
require three copies of \(s\). Hence the only possible decomposition of \(A\)
into simple product-one words is
\[
        (s,s),\qquad (t,t),\qquad (t,t),
\]
up to reordering of these three words.

The partial-product set of the word \((s,s)\) is
\[
        \{1,s\}=\langle s\rangle.
\]
Thus any tile arising from \((s,s)\) is a right coset of \(\langle s\rangle\).
Similarly, the partial-product set of \((t,t)\) is
\[
        \{1,t\}=\langle t\rangle,
\]
so any tile arising from \((t,t)\) is a right coset of \(\langle t\rangle\).

Therefore a cycle-tiling decomposition of \(A\) would give a disjoint
partition of \(S_3\) into one right coset of \(\langle s\rangle\) and two
right cosets of \(\langle t\rangle\).

We now show that no such partition exists. The subgroup \(\langle t\rangle\)
has three right cosets in \(S_3\). The two right cosets of \(\langle t\rangle\)
appearing in the proposed tiling must be distinct, because the tiling is
disjoint. Their union therefore has complement equal to the remaining right
coset of \(\langle t\rangle\). Hence, if \(S_3\) were the disjoint union of
one right coset of \(\langle s\rangle\) and two right cosets of
\(\langle t\rangle\), then that right coset of \(\langle s\rangle\) would have
to be equal to a right coset of \(\langle t\rangle\).

We use the elementary fact that two right cosets \(Hx\) and \(Ky\) of
subgroups \(H,K\le G\) can be equal only if \(H=K\). Indeed, if \(Hx=Ky\),
choose an element \(z\in Hx=Ky\). Then
\[
        Hx=Hz,\qquad Ky=Kz,
\]
and hence
\[
        Hz=Kz.
\]
Multiplying on the right by \(z^{-1}\) gives
\[
        H=K.
\]
Applying this fact with \(H=\langle s\rangle\) and \(K=\langle t\rangle\),
we would obtain
\[
        \langle s\rangle=\langle t\rangle.
\]
This is impossible, since \(s=(12)\) and \(t=(23)\) generate distinct
subgroups of order \(2\).

This contradiction shows that \(A\) is not quotient-realizable in \(S_3\).
\end{proof}

\begin{remark}
The obstruction in Proposition \ref{prop:S3-counterexample} is genuinely a tiling obstruction. Indeed, the multiset
\[
A=\{s,s,t,t,t,t\}
\]
can be partitioned into simple product-one words as
\[
A=[(s,s)]\sqcup[(t,t)]\sqcup[(t,t)].
\]
Thus the failure of quotient-realizability does not come from the absence of product-one pieces. Rather, the corresponding partial-product sets are
\[
P((s,s))=\{1,s\}=\langle s\rangle,
\qquad
P((t,t))=\{1,t\}=\langle t\rangle.
\]
Proposition \ref{prop:S3-counterexample} shows that these sets cannot be placed as disjoint right translates whose union is $S_3$. In this sense, quotient-realizability is stronger than both the abelianization condition and the existence of a product-one ordering: the product-one pieces must also fit together as a cycle-tiling of the group.
\end{remark}

\begin{proposition}\label{prop:S3-admissible}
The multiset
\[
        A=\{s,s,t,t,t,t\}
\]
satisfies the abelianization obstruction and admits an ordering whose product
is \(1\).
\end{proposition}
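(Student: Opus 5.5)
The plan is to verify the two claims directly, since both reduce to short computations in \(S_3\).

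For the abelianization obstruction, I will first identify \(G_{\rm ab}\) for \(G=S_3\). The commutator subgroup is \([S_3,S_3]=A_3\), so \(G_{\rm ab}\cong\{\pm1\}\). Under this identification, \(\pi\) is the sign homomorphism. Both \(s=(12)\) and \(t=(23)\) are transpositions, so \(\pi(s)=\pi(t)=-1\). The multiset \(A\) contains six transpositions, so the product with multiplicity is
\[
        \prod_{a\in A}^{\rm mult}\pi(a)=(-1)^6=1 .
\]
This is the condition of Proposition~\ref{prop:abelianization-obstruction}. It is worth stating that \([S_3,S_3]=A_3\) explicitly. It holds because \(A_3\) is the unique index-two subgroup, and the commutator \(sts^{-1}t^{-1}=(st)^2\) is a \(3\)-cycle, so \([S_3,S_3]\) contains \(A_3\).

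For the product-one ordering, I will take the ordering \(s,s,t,t,t,t\). Since \(s^2=t^2=1\), the product is
\[
        s\,s\,t\,t\,t\,t=(s^2)(t^2)(t^2)=1 .
\]
Because the product here is \(1\), it is \(1\) under either reading of the order, whether the letters are multiplied left to right or right to left as in the convention for \(p_j\).

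There is no real obstacle. The only point needing care is justifying that \(G_{\rm ab}\) is detected by the sign, and that is the step I would write out explicitly.
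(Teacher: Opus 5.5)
Your proposal is correct and matches the paper's proof: both identify the abelianization of $S_3$ with $C_2$ via the sign, observe that six transpositions give a trivial product there, and exhibit the ordering $ss\,tt\,tt=1$. As a small aside, the first claim follows from the second, since applying $\pi$ to an ordered product equal to $1$ in $S_3$ already gives the abelianization condition.
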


\begin{proof}
The abelianization of \(S_3\) is isomorphic to \(C_2\), and every
transposition maps to the nontrivial element of \(C_2\). Since \(A\) contains
six transpositions, the product of the images of its elements in the
abelianization is trivial. Thus \(A\) satisfies the abelianization
obstruction.

Moreover,
\[
        ss\,tt\,tt=1,
\]
because \(s^2=t^2=1\). Hence the elements of \(A\) can be ordered so that
their product is \(1\).
\end{proof}

\begin{corollary}\label{cor:S3-nonsufficient}
There exists a finite nonabelian group for which the abelianization condition
is not sufficient for quotient-realizability. Moreover, the existence of an
ordering of the multiset with product \(1\) is not sufficient.
\end{corollary}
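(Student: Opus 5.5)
The corollary follows by combining the two propositions just proved, with \(G=S_3\) as the witness. The plan is to exhibit a single multiset that passes both necessary-looking tests but fails realizability.

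First, \(S_3\) is a finite nonabelian group, since \(st\neq ts\) for \(s=(12)\) and \(t=(23)\). Under the right-to-left convention, \(st=(123)\) and \(ts=(132)\). Take
\[
A=\{s,s,t,t,t,t\},
\]
a multiset of cardinality \(6=|S_3|\).

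By Proposition~\ref{prop:S3-admissible}, \(A\) satisfies the abelianization condition of Proposition~\ref{prop:abelianization-obstruction}. The same proposition shows that \(A\) admits the product-one ordering \(ss\,tt\,tt=1\). By Proposition~\ref{prop:S3-counterexample}, which rests on Theorem~\ref{thm:cycle-tiling} and Lemma~\ref{lem:S3-words}, \(A\) is not quotient-realizable in \(S_3\).

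These facts give both claims at once. The abelianization condition holds for \(A\) while quotient-realizability fails, so that condition is not sufficient in \(S_3\). The stronger hypothesis, that some ordering of \(A\) has product \(1\) in \(G\), also holds for \(A\), so it is not sufficient either. No step here is a real obstacle, since all the substantive work (the word classification and the coset-tiling impossibility) is already contained in the cited results. The only point to state explicitly is that one multiset witnesses both claims simultaneously.
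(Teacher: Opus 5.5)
Your proposal is correct and follows the paper's route: the corollary follows directly from Propositions~\ref{prop:S3-counterexample} and~\ref{prop:S3-admissible}, with the single witness $A=\{s,s,t,t,t,t\}$ in $S_3$ used exactly as you do. Your explicit check that $st=(123)\neq(132)=ts$, confirming that $S_3$ is nonabelian under the paper's convention, is a small detail the paper leaves implicit.
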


\section{The obstruction persists in infinitely many groups}
\label{sec:infinite-family}

The counterexample in \(S_3\) is not isolated. In this section we use the
subgroup-support theorem to construct an infinite family of finite nonabelian
groups for which the abelianization obstruction is not sufficient.

We first record a sharper form of the \(S_3\) obstruction.

\begin{lemma}\label{lem:S3-counting}
Let
\[
        G=S_3,\qquad s=(12),\qquad t=(23).
\]
Let \(B\) be a quotient-realizable multiset of cardinality \(6\) supported in
\(\{s,t\}\). If \(B\) contains \(q\) copies of \(s\), then
\[
        q\in \{0,3,6\}.
\]
Conversely, for each \(q\in\{0,3,6\}\), there exists a quotient-realizable
multiset of cardinality \(6\) supported in \(\{s,t\}\) containing exactly
\(q\) copies of \(s\).
\end{lemma}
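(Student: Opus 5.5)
We split the argument into the forward (necessity) direction and the converse (construction) direction, and both rest on Theorem~\ref{thm:cycle-tiling} together with Lemma~\ref{lem:S3-words}.

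\emph{Necessity.} Suppose \(B\) is quotient-realizable with \(q\) copies of \(s\) and \(6-q\) copies of \(t\). By Theorem~\ref{thm:cycle-tiling}, \(B\) has a cycle-tiling decomposition into simple product-one words with letters in \(\{s,t\}\).

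By Lemma~\ref{lem:S3-words}, each word is one of \((s,s)\), \((t,t)\), or an alternating word of length \(6\). Since \(|B|=6\), an alternating word of length \(6\) uses all of \(B\); in that case \(q=3\).

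Otherwise, every word has length \(2\), so the decomposition consists of \(q/2\) copies of \((s,s)\) and \((6-q)/2\) copies of \((t,t)\). The tiling is then a partition of \(S_3\) into \(q/2\) right cosets of \(\langle s\rangle\) and \((6-q)/2\) right cosets of \(\langle t\rangle\).

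If both types occur, we repeat the argument of Proposition~\ref{prop:S3-counterexample}. Suppose first there is exactly one coset of \(\langle s\rangle\) and two cosets of \(\langle t\rangle\). The complement of the two \(\langle t\rangle\)-cosets is the third \(\langle t\rangle\)-coset, so it would equal a coset of \(\langle s\rangle\). By the coset-equality fact used there, this forces \(\langle s\rangle=\langle t\rangle\), which is false. The case of two \(\langle s\rangle\)-cosets and one \(\langle t\rangle\)-coset is symmetric, with the roles of \(s\) and \(t\) swapped.

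Hence only one type of coset occurs, giving \(q\in\{0,6\}\). The case \(q\) odd never arises in the length-\(2\) situation. This is automatic, since each length-\(2\) word uses letters in pairs.

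\emph{Converse.} We use explicit constructions.
\begin{itemize}
\item For \(q=6\): tile \(S_3\) by the three right cosets of \(\langle s\rangle\), each carrying the word \((s,s)\). Theorem~\ref{thm:cycle-tiling} then gives realizability. Alternatively, apply Corollary~\ref{cor:cyclic-subgroup} with \(H=\langle s\rangle\), partitioning into three blocks \(\{s,s\}\), each with exponent sum \(2\equiv 0\pmod 2\).
\item For \(q=0\): argue the same way with \(t\) in place of \(s\).
\item For \(q=3\): take the alternating word \(w=(s,t,s,t,s,t)\). We check it is simple product-one. Its product is \((ts)^3=1\). Its partial products are \(1,\ s,\ ts,\ sts,\ tsts,\ ststs\). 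Since \(tsts=(ts)^2=st\) and \(ststs=t\), these are the six distinct elements of \(S_3\). So \(P(w)=S_3\), and the single tile \(P(w)\cdot 1\) is all of \(S_3\). Theorem~\ref{thm:cycle-tiling} then shows \(\{s,s,s,t,t,t\}\) is quotient-realizable.
\end{itemize}

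The only point needing care is checking that the partial products of the alternating word are distinct. The rest follows directly from the earlier results.
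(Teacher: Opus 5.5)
Your proposal is correct and follows essentially the same route as the paper. You use the cycle-tiling criterion with the classification of simple product-one words, rule out mixed tilings by one or two cosets of $\langle s\rangle$ against cosets of $\langle t\rangle$ via the coset-equality argument, and give the same three explicit constructions; the only cosmetic difference is that you check directly that the partial products of $(s,t,s,t,s,t)$ are the six elements of $S_3$, which the paper also offers as an equivalent verification.
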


\begin{proof}
Suppose first that \(B\) is quotient-realizable. By Theorem
\ref{thm:cycle-tiling}, \(B\) admits a cycle-tiling decomposition into simple
product-one words. Since \(B\) is supported in \(\{s,t\}\), every word in this
decomposition has all of its letters in \(\{s,t\}\).

By Lemma \ref{lem:S3-words}, every such simple product-one word is either
\[
        (s,s),\qquad (t,t),
\]
or an alternating word of length \(6\), containing three copies of \(s\) and
three copies of \(t\).

If the decomposition contains an alternating word of length \(6\), then that
word uses all six letters of \(B\). Hence \(B\) contains exactly three copies
of \(s\).

Assume now that no alternating word occurs. Then the decomposition consists
only of words of type \((s,s)\) and \((t,t)\). Since \(|B|=6\), there are
three such words. If \(a\) of them are of type \((s,s)\), then \(B\) contains
\[
        q=2a
\]
copies of \(s\), where \(a\in\{0,1,2,3\}\).

The partial-product set of \((s,s)\) is
\[
        \{1,s\}=\langle s\rangle,
\]
so a tile arising from \((s,s)\) is a right coset of \(\langle s\rangle\).
Similarly, a tile arising from \((t,t)\) is a right coset of
\(\langle t\rangle\).

If \(a=1\), then \(S_3\) would be partitioned into one right coset of
\(\langle s\rangle\) and two right cosets of \(\langle t\rangle\). This is
impossible by the coset argument in the proof of
Proposition \ref{prop:S3-counterexample}. If \(a=2\), then \(S_3\) would be
partitioned into two right cosets of \(\langle s\rangle\) and one right coset
of \(\langle t\rangle\). Taking complements gives the same contradiction: a
right coset of \(\langle s\rangle\) would have to be equal to a right coset of
\(\langle t\rangle\), which is impossible because
\[
        \langle s\rangle\ne \langle t\rangle.
\]
Therefore \(a\ne 1,2\), and hence
\[
        q\in\{0,3,6\}.
\]

It remains to prove the converse. If \(q=0\), then
\[
        B=\{t,t,t,t,t,t\}.
\]
Partition \(S_3\) into its three right cosets of \(\langle t\rangle\), and on
each coset use a \(2\)-cycle with labels \((t,t)\). This gives a quotient
realization. The case \(q=6\) is identical, using the three right cosets of
\(\langle s\rangle\).

Finally, suppose \(q=3\). Consider the alternating word
\[
        w=(s,t,s,t,s,t).
\]
With our convention for left partial products,
\[
        p_0=1,\qquad p_j=g_jg_{j-1}\cdots g_1.
\]
The full product is
\[
        p_6=tststs=(ts)^3=1.
\]
The partial products
\[
        p_0,p_1,\ldots,p_5
\]
are distinct, because otherwise the word would close earlier and would give
an alternating product-one word of length strictly smaller than \(6\), which
is impossible since \(st\) and \(ts\) have order \(3\). Equivalently, one may
check directly that these six partial products are the six elements of
\(S_3\). Hence \(w\) is a simple product-one word with
\[
        P(w)=S_3.
\]
Thus \(w\) gives a one-cycle quotient realization of a multiset with three
copies of \(s\) and three copies of \(t\).
\end{proof}

\begin{theorem}\label{thm:infinite-family}
Let \(K\) be a finite group such that \(3\nmid |K|\), and set
\[
        G=S_3\times K.
\]
Let
\[
        s'=((12),1_K),\qquad t'=((23),1_K).
\]
Let \(A\) be the multiset in \(G\) consisting of \(2|K|\) copies of \(s'\) and
\(4|K|\) copies of \(t'\). Then \(A\) satisfies the abelianization obstruction
and admits an ordering whose product is \(1\), but \(A\) is not
quotient-realizable in \(G\).
\end{theorem}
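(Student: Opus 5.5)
The plan is to handle the two positive properties directly and to obtain non-realizability by applying Theorem~\ref{thm:subgroup-support} to the copy of \(S_3\) inside \(G\), followed by a counting argument based on Lemma~\ref{lem:S3-counting}.

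For the positive properties, first note that \(G_{\rm ab}\cong (S_3)_{\rm ab}\times K_{\rm ab}\cong C_2\times K_{\rm ab}\). Both \(s'\) and \(t'\) map to \((\bar 1,1)\). Since \(|A|=6|K|\) is even, the product of the images is trivial. For a product-one ordering, list the elements of \(A\) as \(|K|\) blocks \(s's'\) followed by \(2|K|\) blocks \(t't'\); each block equals \(1\), since \(s'^2=t'^2=1\).

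For non-realizability, let \(H=S_3\times\{1_K\}\cong S_3\). Then \(A\) is supported in \(H\), and \([G:H]=|K|\). Assume \(A\) is quotient-realizable in \(G\). By Theorem~\ref{thm:subgroup-support}, \(A\) splits into \(|K|\) submultisets \(A_1,\ldots,A_{|K|}\), each of size \(6\) and each quotient-realizable in \(H\). Under the isomorphism \(H\cong S_3\), each \(A_j\) becomes a quotient-realizable multiset of size \(6\) supported in \(\{s,t\}\).

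By Lemma~\ref{lem:S3-counting}, the number \(q_j\) of copies of \(s'\) in \(A_j\) lies in \(\{0,3,6\}\), so \(3\mid q_j\) for every \(j\). Summing over \(j\) gives
\[
        2|K|=\sum_j q_j\equiv 0\pmod 3.
\]
Since \(\gcd(2,3)=1\), this forces \(3\mid |K|\), contradicting the hypothesis \(3\nmid|K|\).

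I expect no real obstacle. The only point needing care is transporting quotient-realizability in \(H\) to \(S_3\) via the isomorphism, which is immediate because the property is defined purely group-theoretically. The hypothesis \(3\nmid|K|\) enters only in this final congruence step.
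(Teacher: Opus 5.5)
Your proposal is correct and follows the paper's proof essentially step for step. It uses the same abelianization parity and pairing arguments for the two positive properties, and the same application of Theorem~\ref{thm:subgroup-support} with \(H=S_3\times\{1_K\}\), followed by the mod-\(3\) count from Lemma~\ref{lem:S3-counting} giving \(3\mid 2|K|\).
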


\begin{proof}
Let
\[
        m=|K|.
\]
The multiset \(A\) has cardinality
\[
        2m+4m=6m=|S_3\times K|=|G|.
\]

First we verify the two product conditions. In the abelianization of
\(G=S_3\times K\), the elements \(s'\) and \(t'\) have the same image coming
from the nontrivial element of the abelianization of \(S_3\). This image has
order \(2\). Since \(A\) contains
\[
        2m+4m=6m
\]
such elements, an even number, the product of their images in
\(G_{\mathrm{ab}}\) is trivial. Hence \(A\) satisfies the abelianization
obstruction.

Also, \(A\) admits an ordering whose product is \(1\). Indeed, arrange the
\(2m\) copies of \(s'\) into \(m\) adjacent pairs and the \(4m\) copies of
\(t'\) into \(2m\) adjacent pairs. Since
\[
        (s')^2=(t')^2=1,
\]
the resulting ordered product is \(1\).

We now prove that \(A\) is not quotient-realizable. Let
\[
        H=S_3\times\{1_K\}.
\]
Then \(H\le G\), \(|H|=6\), and
\[
        [G:H]=m.
\]
The multiset \(A\) is supported in \(H\).

Assume, for a contradiction, that \(A\) is quotient-realizable in \(G\). By
Theorem~\ref{thm:subgroup-support}, \(A\) can be partitioned into \(m\)
submultisets
\[
        A=A_1\sqcup\cdots\sqcup A_m,
        \qquad |A_j|=6,
\]
such that each \(A_j\) is quotient-realizable in \(H\).

Identifying \(H\) with \(S_3\), each \(A_j\) is a quotient-realizable multiset
of cardinality \(6\) supported in \(\{s',t'\}\). Let \(q_j\) be the number of
copies of \(s'\) in \(A_j\). By Lemma~\ref{lem:S3-counting},
\[
        q_j\in\{0,3,6\}
        \qquad (1\le j\le m).
\]
Therefore
\[
        \sum_{j=1}^m q_j
\]
is divisible by \(3\). On the other hand, this sum is exactly the total number
of copies of \(s'\) in \(A\), namely
\[
        \sum_{j=1}^m q_j=2m.
\]
Since \(3\nmid m\), we have \(3\nmid 2m\), a contradiction. Hence \(A\) is not
quotient-realizable in \(G\).
\end{proof}

\begin{corollary}\label{cor:infinitely-many}
There are infinitely many finite nonabelian groups \(G\) admitting multisets
\(A\) of cardinality \(|G|\) such that:
\begin{enumerate}[label=\textup{(\roman*)}]
    \item \(A\) satisfies the abelianization obstruction;
    \item the elements of \(A\) can be ordered with product \(1\);
    \item \(A\) is not quotient-realizable in \(G\).
\end{enumerate}
\end{corollary}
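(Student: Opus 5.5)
The plan is to derive Corollary \ref{cor:infinitely-many} directly from Theorem \ref{thm:infinite-family}, by exhibiting infinitely many pairwise non-isomorphic finite groups \(K\) with \(3\nmid |K|\). The simplest choice is the cyclic groups \(K=C_m\) with \(m\ge 1\) and \(3\nmid m\), for instance \(m\in\{1,2,4,5,7,8,\ldots\}\), or simply \(m=2^k\) for \(k\ge 0\).

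For each such \(m\), set \(G_m=S_3\times C_m\). This group is nonabelian, since it contains \(S_3\times\{1\}\cong S_3\) as a subgroup and the transpositions \(s'\) and \(t'\) do not commute. The groups are pairwise non-isomorphic because \(|G_m|=6m\) are distinct orders, so the family is infinite.

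Take \(A_m\) to be the multiset from Theorem \ref{thm:infinite-family}: \(2m\) copies of \(s'\) and \(4m\) copies of \(t'\). Its cardinality is \(6m=|G_m|\). Theorem \ref{thm:infinite-family} then gives all three required properties at once.
\begin{enumerate}[label=\textup{(\roman*)}]
    \item \(A_m\) satisfies the abelianization obstruction.
    \item The elements of \(A_m\) admit an ordering with product \(1\).
    \item \(A_m\) is not quotient-realizable in \(G_m\).
\end{enumerate}

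There is no real obstacle. The only points needing a sentence each are that the groups are genuinely nonabelian and that they are pairwise distinct; both follow from the facts above.
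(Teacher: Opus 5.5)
Your proposal is correct and is the paper's argument: the paper's proof also takes $G=S_3\times K$ with $K=C_m$, $3\nmid m$, and applies Theorem~\ref{thm:infinite-family} directly. You additionally check that $S_3\times C_m$ is nonabelian and that these groups are pairwise non-isomorphic, since their orders $6m$ are distinct; the paper leaves both points implicit.
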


\begin{proof}
Take
\[
        G=S_3\times K,
\]
where \(K\) ranges over any infinite family of finite groups whose orders are
not divisible by \(3\); for example, take \(K=C_m\) with \(3\nmid m\).
Theorem \ref{thm:infinite-family} applies.
\end{proof}

\section{Further problems}\label{sec:problems}

The examples above show that the abelianization condition does not control the
nonabelian problem. The remaining challenge is to understand how often the
cycle-tiling obstruction appears, and whether it can be described more
explicitly for familiar families of finite groups.

\begin{problem}
Classify all quotient-realizable multisets of cardinality \(6\) in \(S_3\).
\end{problem}

The present paper studies one particular family of multisets in \(S_3\). A
complete classification would clarify the full range of cycle-tiling
obstructions in the smallest nonabelian group.

\begin{problem}
Let \(D_8\) be the dihedral group of order \(8\). Find a multiset \(A\) of
cardinality \(8\) satisfying the abelianization condition but not
quotient-realizable, or prove that no such multiset exists.
\end{problem}

\begin{problem}
Do the same for the quaternion group \(Q_8\).
\end{problem}

The comparison between \(D_8\) and \(Q_8\) may help identify which structural
features of a nonabelian group are relevant to quotient-realizability.

\begin{problem}
Let \(G\) be a finite group and \(H\le G\) an abelian subgroup. Construct
multisets supported in \(H\) whose total product is trivial in the
abelianization of \(G\), but which cannot be partitioned into zero-sum blocks
of size \(|H|\).
\end{problem}

By Corollary \ref{cor:abelian-subgroup}, such multisets are not
quotient-realizable in \(G\). This problem connects the present paper with
zero-sum theory in finite abelian groups.

\begin{problem}
For which finite groups \(G\) is the abelianization obstruction sufficient for
quotient-realizability?
\end{problem}

Hall's theorem says that every finite abelian group has this property.
Theorem \ref{thm:infinite-family} and Corollary \ref{cor:infinitely-many}
show that infinitely many nonabelian groups do not.

\begin{problem}
Does every finite nonabelian group admit a multiset \(A\) of cardinality
\(|G|\) satisfying the abelianization obstruction but not quotient-realizable?
\end{problem}

A positive answer would characterize finite abelian groups as precisely those
finite groups for which the abelianization obstruction is the only obstruction.

\begin{problem}
Turn the cycle-tiling criterion into explicit tests for familiar families of
groups, such as dihedral groups, quaternion groups, or nilpotent groups of
class two.
\end{problem}

The criterion in Theorem \ref{thm:cycle-tiling} is exact, but more explicit
tests would be useful in applications and examples.

\end{document}